\newtheorem{theorem}{Theorem}[section]
\newtheorem{remark}[theorem]{Remark}
\newtheorem{proposition}[theorem]{Proposition}
\newtheorem{corollary}[theorem]{Corollary}
\newcommand{\LL}{ \mathcal{L}}
\title[Properties of Ratios of Hermite and Parabolic Cylinder Functions]{Universal Bounds and Monotonicity Properties of Ratios of Hermite and Parabolic Cylinder Functions}
\author[T. Koch]{Torben Koch}
\address{T.~Koch: Center for Mathematical Economics (IMW), Bielefeld University, Universit\"atsstrasse 25, 33615, Bielefeld, Germany}
\email{\href{mailto:t.koch@uni-bielefeld.de}{t.koch@uni-bielefeld.de}}
\date{\today}
\numberwithin{equation}{section}
\begin{document}
	
\begin{abstract} 
We obtain so far unproved properties of a ratio involving a class of Hermite and parabolic cylinder functions. Those ratios are shown to be strictly decreasing and bounded by universal constants. Differently to usual analytic approaches, we employ simple purely probabilistic arguments to derive our results. In particular, we exploit the relation between Hermite and parabolic cylinder functions and the eigenfunctions of the infinitesimal generator of the {Ornstein-Uhlenbeck process}. As a byproduct, we obtain {Tur\'an type inequalities}.  
\end{abstract}

\maketitle

\smallskip


{\textbf{Key words}}: Hermite functions; parabolic cylinder functions; Tur\'an type inequalities; Ornstein-Uhlenbeck process.







\section{Introduction}
\label{introduction}
Consider the ordinary differential equation (ODE) \begin{align}\label{eq1}
u''(x)-2xu'(x)+2\nu u(x)=0,\quad\nu<0,\,x\in\mathbb{R}.
\end{align}
Following Section 10.2 in \cite{Lebedev}, the solutions to \eqref{eq1} are called Hermite functions and denoted by $H_\nu$. They are closely connected to parabolic cylinder functions. In fact, letting $\Gamma$ be the Euler's Gamma function, the parabolic cylinder function $D_\nu$, introduced in \cite{Whittaker}, admits the representation (cf. Section 8.3 in \cite{Erdelyi})
\begin{align}\label{eq3}
D_\nu(x):=\frac{e^{-\frac{x^2}{4}}}{\Gamma(-\nu)}\int_{0}^{\infty}t^{-\nu-1}e^{-\frac{t^2}{2}-xt}dt,\quad x\in\mathbb{R},
\end{align}
and satisfies  
 \begin{align}\label{identityCyHer}
D_\nu(x)=2^{-\frac{\nu}{2}}e^{-\frac{x^2}{4}}H_\nu\left(\frac{x}{\sqrt{2}}\right),\quad x\in\mathbb{R}.
\end{align}
In this paper, we study properties of the ratio $\mathcal{R}_\nu:\mathbb{R}\mapsto\mathbb{R}$, where \begin{align}\label{Ratio}
\mathcal{R}_\nu(x):=\frac{(H_{\nu-1}(x))^2}{H_{\nu}(x)H_{\nu-2}(x)},\quad x\in\mathbb{R},
\end{align}
and thanks to \eqref{identityCyHer}, our results carry over to the ratio of $D_\nu$ as well. In particular, we show that $\mathcal{R}_\nu$ is strictly decreasing, and we derive its best possible upper and lower bounds. 

The ratio \eqref{Ratio} is closely related to the so-called Tur\'an types inequalities. Those inequalities have been discovered in 1941 by P. Tur\'an (published in 1950, see \cite{Turan}) for Legendre Polynomials $P_n$, $n\in\mathbb{N}$, and for those functions they read as
\begin{align}\label{TuranIneq}
 P_{n-1}(x)P_{n+1}(x)-P_n^2(x)<0,\quad\text{for all } x\in(-1,1).
\end{align}
Notice that the validity of \eqref{TuranIneq} was first proved by G. Szeg\"o in 1948 (see \cite{Szego}). Since then, inequalities of this form have attracted a lot of attention, and have been proved to be valid for other polynomials such as Hermite (obtained from Hermite functions by taking $\nu\in\mathbb{N}$), Jacobi, Laguerre or ultraspherical  polynomials (see \cite{Gasper,Szego}, among others), and for special functions as (modified) Bessel, Gamma, parabolic cylinder or 
hypergeometric functions (see \cite{Alzer,baricz3, baricz4, baricz2, baricz,Thiruvenkatachar}, among many others). Applications of Tur\'an type inequalities can be found in many fields, ranging from biophysics (see \cite{baricz5} and the references therein) to information theory (see \cite{McEliece}) and stochastic control (see \cite{Becherer,Koch}).

Properties of ratios of special functions as in \eqref{Ratio} have also gained interest in recent years. In \cite{Sitnik}, conjectures about the monotonicity of a ratio associated to exponential series sections are formulated. Those conjectures are then proved in \cite{SitnikMehrez1,SitnikMehrez2} for classical Kummer and Gauss hypergeometric functions, as well as for the so-called $q$-Kummer confluent hypergeometric and $q$-hypergeometric functions. Moreover, the monotonicity of a ratio like \eqref{Ratio} associated to modified Bessel functions of the first and second kind has been proved to be valid and used for the proofs of Theorem 2.1 and Theorem 3.1 in \cite{baricz3}. Our focus on \eqref{Ratio} is motivated by an optimal liquidation problem in a financial market (see Remark 6.8 in \cite{Becherer}). Lower and upper bounds for $\mathcal{R}_\nu$ have already been studied by \cite{Segura}, but we are able to show that our bounds are the best possible ones, and this leads to a discrepancy between the results in \cite{Segura} and ours (see Remark \ref{Remark}).

In all the aforementioned references on Tur\'an type inequalities (see \cite{Alzer,baricz3, baricz4, baricz2, baricz,Gasper,SitnikMehrez1,SitnikMehrez2, Szego,Thiruvenkatachar,Turan}), the authors use purely analytic approaches to prove their results. Here, instead, we follow a completely different approach that uses probabilistic arguments, and leads to a simple and short proof of our results. In particular, we exploit the relation of Hermite functions to the eigenfunctions of the infinitesimal generator of the {Ornstein-Uhlenbeck process}.

The paper is organised as follows: in Section \ref{Sec:ProbCon}, we introduce and recall the properties of the Ornstein-Uhlenbeck process and point out its connections to Hermite and parabolic cylinder functions. Then, in Section \ref{sec:MonotonRatio}, the results from Section \ref{Sec:ProbCon} are used to prove the claimed properties of $\mathcal{R}_\nu$.

\section{Ornstein-Uhlenbeck Process and Hermite Functions}\label{Sec:ProbCon}
Let $(\Omega, \mathcal{F}, \mathbb{F}:=(\mathcal{F}_t)_{t\geq 0}, \mathbb{P})$ be a filtered probability space with a filtration $\mathbb{F}$ satisfying the usual conditions, and carrying a standard one-dimensional $\mathbb{F}$-Brownian motion $W$.

We now introduce the so-called Ornstein-Uhlenbeck process and we will recall some of its well known properties. The Ornstein-Uhlenbeck process has been introduced for the first time in \cite{Ornstein}, and in modern stochastic analysis it is defined as the unique strong solution to the stochastic differential equation
\begin{align}\label{Xuncontrolled}
dX^x_t = \left(\mu-X^x_t\right)dt + \sigma dW_t,\quad X^x_0 = x\in\mathbb{R},
\end{align}
for $\mu\in\mathbb{R}$ and $\sigma>0$. For any given initial value $x\in\mathbb{R}$, the process $X^x:=(X^x_t)_{t\geq 0}$ is Gaussian. In particular, equation \eqref{Xuncontrolled} admits the explicit solution
\begin{align}\label{SolOU}
X^x_t=xe^{-t}+\mu(1-e^{-t})+\int_{0}^{t}\sigma e^{s-t}dW_s,
\end{align} and it follows from \eqref{SolOU} that for any $t\geq 0$ $$X_t^x\sim\mathcal{N}\left(xe^{-t}+\mu\left(1-e^{-t}\right),\frac{\sigma^2}{2}\left(1-e^{-2t}\right)\right),$$ where $\mathcal{N}(\alpha,\gamma)$ denotes the Gaussian distribution function with mean $\alpha\in\mathbb{R}$ and variance $\gamma>0$.

The infinitesimal generator associated to $X^x$ is denoted by $\mathcal{L}$ and, for any $u:\mathbb{R}\mapsto\mathbb{R}$ s.t. $u\in C^2(\mathbb{R})$, it is defined by
\begin{align}
\left[\mathcal{L}u\right](x):=\lim\limits_{t\downarrow 0}\frac{\mathbb{E}\left(u(X_t^x)\right)-u(x)}{t},\quad x\in\mathbb{R}.
\end{align}
In particular, by an application of Dynkin's formula (cf.\ Theorem 7.4.1 in \cite{Oksendal}) and of the mean-value theorem, one obtains
$$\mathcal{L}u(x)=\frac{\sigma^2}{2}u''(x)+(\mu-x)u'(x),\quad x\in\mathbb{R}.$$
Given $\nu<0$, it is well known that the ODE $\mathcal{L}u+\nu u=0$ admits a strictly increasing positive fundamental solution $\psi$. The function $\psi$ can be expressed in terms of the cylinder function (see, e.g., p. 280 in \cite{Jeanblanc}) (up to a positive constant); that is 
\begin{align}\label{eq6}
\psi(x)&\propto e^{\frac{(x-\mu)^2}{2\sigma^2}}D_{\nu}\bigg(\frac{\mu-x}{\sigma}\sqrt{2}\bigg),\quad x\in\mathbb{R}.
\end{align}
In light of \eqref{identityCyHer} and \eqref{eq6}, from now on we thus identify the positive strictly increasing eigenvector of $\mathcal{L}$ with
\begin{align}\label{eq4}
\psi(x)&=H_\nu\left(\frac{\mu-x}{\sigma}\right),\quad x\in\mathbb{R}.
\end{align}


\section{The Main Result: Monotonicity of Ratios of Hermite Functions}\label{sec:MonotonRatio}
In this section, we use the link between the Ornstein-Uhlenbeck process and the Hermite functions in order to study the monotonicity of the ratio $\mathcal{R}_\nu$ from \eqref{Ratio}.


\begin{theorem}\label{MonProp}
	For all $\nu<0$, the function $\mathcal{R}_\nu$ as in \eqref{Ratio} is strictly decreasing.
\end{theorem}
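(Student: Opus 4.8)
The plan is to exploit the probabilistic meaning of the Hermite functions as increasing eigenfunctions of $\mathcal{L}$, combined with the elementary fact that a moment generating function is log-convex. Throughout, fix $\nu<0$ and, for any $\theta<0$, write $\psi_\theta(x):=H_\theta\big(\tfrac{\mu-x}{\sigma}\big)$, so that, exactly as in \eqref{eq4}, each $\psi_\theta$ is the strictly increasing positive solution of $\mathcal{L}u=-\theta u$. For a level $b\in\mathbb{R}$ set $\tau_b:=\inf\{t\geq 0:\,X^x_t=b\}$. Since the Ornstein--Uhlenbeck process is positively recurrent, $\tau_b<\infty$ almost surely, and the first step is to recall the classical first-passage identity for one-dimensional diffusions (see, e.g., \cite{Jeanblanc}): for every $x\leq b$ and every $\theta<0$,
\begin{align}\label{plan:hit}
\mathbb{E}\big[e^{\theta\tau_b}\big]=\frac{\psi_\theta(x)}{\psi_\theta(b)}.
\end{align}
Here $-\theta>0$ is a genuine eigenvalue, and $x\mapsto\mathbb{E}[e^{\theta\tau_b}]$ is the increasing solution of $\mathcal{L}u=-\theta u$ normalised to $1$ at $b$, which identifies it as $\psi_\theta/\psi_\theta(b)$. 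This is precisely where the hypothesis $\nu<0$ enters: it guarantees that all three indices $\nu,\nu-1,\nu-2$ are negative, hence correspond to positive eigenvalues for which \eqref{plan:hit} applies.

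The decisive step is to observe that, for fixed $x\leq b$, the map $g(\theta):=\mathbb{E}\big[e^{\theta\tau_b}\big]$ is the moment generating function of the nondegenerate positive random variable $\tau_b$, and is therefore strictly log-convex on $(-\infty,0]$: by the Cauchy--Schwarz inequality,
\begin{align}\label{plan:logconv}
g(\nu-1)^2=\mathbb{E}\big[e^{(\nu-1)\tau_b}\big]^2<\mathbb{E}\big[e^{(\nu-2)\tau_b}\big]\,\mathbb{E}\big[e^{\nu\tau_b}\big]=g(\nu-2)\,g(\nu),
\end{align}
the inequality being strict because $\tau_b$ is not almost surely constant. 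Substituting \eqref{plan:hit} for each of the three factors of \eqref{plan:logconv}, the factors evaluated at $b$ collect into $\psi_{\nu-1}(b)^2/\big(\psi_\nu(b)\psi_{\nu-2}(b)\big)$ on the right-hand side, so that after rearranging,
\begin{align}\label{plan:key}
\frac{\psi_{\nu-1}(x)^2}{\psi_\nu(x)\,\psi_{\nu-2}(x)}<\frac{\psi_{\nu-1}(b)^2}{\psi_\nu(b)\,\psi_{\nu-2}(b)},\qquad x<b.
\end{align}

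The final step is to translate \eqref{plan:key} into a statement about $\mathcal{R}_\nu$. By the definition of $\psi_\theta$, the left-hand side of \eqref{plan:key} equals $\mathcal{R}_\nu\big(\tfrac{\mu-x}{\sigma}\big)$ and the right-hand side equals $\mathcal{R}_\nu\big(\tfrac{\mu-b}{\sigma}\big)$. Since $x<b$ is equivalent to $\tfrac{\mu-x}{\sigma}>\tfrac{\mu-b}{\sigma}$, inequality \eqref{plan:key} says precisely that $\mathcal{R}_\nu$ takes a strictly smaller value at the larger argument; as $x<b$ range freely over $\mathbb{R}$ and $z\mapsto\tfrac{\mu-z}{\sigma}$ is a bijection of $\mathbb{R}$, this is exactly the assertion that $\mathcal{R}_\nu$ is strictly decreasing. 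I expect the main obstacle to be the rigorous justification of the first-passage identity \eqref{plan:hit}---in particular confirming the boundary behaviour of the Ornstein--Uhlenbeck process and that $\psi_\theta$ is the relevant increasing eigenfunction---whereas the log-convexity \eqref{plan:logconv} is immediate. A secondary point requiring care is the bookkeeping of the index shift together with the orientation-reversing change of variable $z\mapsto\tfrac{\mu-z}{\sigma}$, which is exactly what converts the increasing behaviour in $x$ into the claimed \emph{decreasing} behaviour of $\mathcal{R}_\nu$.
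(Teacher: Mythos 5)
Your proof is correct and follows essentially the same route as the paper: both identify $H_\theta\big(\tfrac{\mu-x}{\sigma}\big)$ for $\theta=\nu,\nu-1,\nu-2$ with increasing eigenfunctions of the Ornstein--Uhlenbeck generator, represent their ratios as Laplace transforms $\mathbb{E}[e^{\theta\tau_b}]$ of the hitting time, and invoke strict log-convexity of the moment generating function via Cauchy--Schwarz/H\"older. The only (cosmetic) difference is that the paper works with the derivatives $\psi^{(k)}$ of a single eigenfunction and hence carries a normalising factor $\tfrac{\nu}{\nu-1}$ when passing to $\mathcal{R}_\nu$, whereas you use the three eigenfunctions directly and avoid that constant.
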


\begin{proof}
The proof is organised in two steps. First, recalling $\psi$ from \eqref{eq4}, in \emph{Step 1} we prove that the function $\Psi:\mathbb{R}\mapsto\mathbb{R}$ such that
$$\Psi(x):=\frac{\psi'(x)^2}{\psi(x)\psi''(x)},\quad x\in\mathbb{R},$$ is strictly increasing. Then, in \emph{Step 2} we make the conclusion for $\mathcal{R}_\nu$.

\vspace{0.25cm}
\emph{Step 1.} Let $x,y\in\mathbb{R}$ be such that $y>x$, recall $X^x$ solving \eqref{Xuncontrolled}, and define the first hitting time of $X^x$ at level $y$ by $$\tau_y:=\inf\{t\geq 0:\, X_t^x\geq y\},\quad\mathbb{P}\text{-a.s.}$$ Direct calculations on \eqref{eq4} and the identity (cf., e.g., equation (10.4.4) in \cite{Lebedev})
\begin{align}\label{DerH}
H_\nu'(x)=2\nu H_{\nu-1}(x),\quad x\in\mathbb{R},\,\nu<0,
\end{align}
show that the $k$-th derivative of $\psi$, denoted by $\psi^{(k)}$, is a strictly increasing positive solution to $\left(\LL +(\nu-k)\right)\psi^{(k)}=0$. Now, since $\psi^{(k)}\in C^2(\mathbb{R})$ for any $k\in\mathbb{N}_0$, It\^o's formula (together with a standard localization argument) yields, after taking expectations,
\begin{align*}
\mathbb{E}\left[e^{(\nu-k)\tau_y}\psi^{(k)}\big(X_{\tau_y}^x\big)\right]=\psi^{(k)}(x),
\end{align*}
and hence 
\begin{align}\label{eq5}
\mathbb{E}\left[e^{(\nu-k)\tau_y}\right]=\frac{\psi^{(k)}(x)}{\psi^{(k)}(y)},
\end{align}
since $\tau_y<\infty$ $\mathbb{P}$-a.s.\ as $X^x$ is positively recurrent (cf.\ Appendix 1.24 in \cite{Borodin}). Now, H\"older's inequality yields
\begin{align}\label{eq2}
\mathbb{E}\left[e^{\nu\tau_y}\right]^{\frac{1}{2}}\mathbb{E}\left[e^{(\nu-2)\tau_y}\right]^{\frac{1}{2}}>\mathbb{E}\left[e^{(\nu-1)\tau_y}\right],
\end{align} which is strict since the function $f(z):=e^{\nu z}$ is not a multiple of the function $g(z):=e^{(\nu-2) z}$, and the random variable $\tau_y$ has a distribution which is absolutely continuous with respect to the Lebesgue measure on $\mathbb{R}_+$.
From both \eqref{eq5} with $k=0,1,2$ and \eqref{eq2}, we find $$\Psi(y)>\Psi(x).$$ Since $y>x$ were arbitrary, we have that $x\mapsto\Psi(x)$ is strictly increasing.

\vspace{0.25cm}
\emph{Step 2.} Exploiting the identities \eqref{eq4} and \eqref{DerH}, we find that for any $x\in\mathbb{R}$ $$\Psi(x)=\frac{\nu}{\nu-1}\mathcal{R}_\nu\left(\frac{\mu-x}{\sigma}\right).$$ Therefore, because $\nu<0$, we conclude by \emph{Step 1} that the function $\mathcal{R}_\nu$ is strictly decreasing.
\end{proof}

The following corollary gives the best possible bounds for $\mathcal{R}_\nu$. These in turn imply the Tur\'{a}n type inequality.
\begin{corollary}\label{CorBounds}
	For all $\nu<0$, the function $\mathcal{R}_\nu$ as in \eqref{Ratio} is such that 
	\begin{align}\label{Bounds}
	1<\mathcal{R}_\nu(x)<\frac{\nu-1}{\nu},\quad\text{for all }x\in\mathbb{R}.
	\end{align}
	In particular, the following Tur\'{a}n-type inequality holds:
	$$H_{\nu-1}(x)^2-H_{\nu}(x)H_{\nu-2}(x)>0,\quad x\in\mathbb{R}.$$
\end{corollary}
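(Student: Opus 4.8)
The plan is to derive both inequalities in \eqref{Bounds} as the limiting values of the monotone function $\mathcal{R}_\nu$ established in Theorem \ref{MonProp}, and then read off the Tur\'an-type inequality directly from the lower bound. Since $\mathcal{R}_\nu$ is strictly decreasing on $\mathbb{R}$, its values are trapped strictly between its limits at $+\infty$ and $-\infty$; that is, $\lim_{x\to+\infty}\mathcal{R}_\nu(x)<\mathcal{R}_\nu(x)<\lim_{x\to-\infty}\mathcal{R}_\nu(x)$ for every $x\in\mathbb{R}$, provided both limits exist. The whole task therefore reduces to computing these two limits and showing they equal $1$ and $\frac{\nu-1}{\nu}$ respectively.

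First I would translate everything back into the probabilistic representation from the proof of Theorem \ref{MonProp}, since the limits are cleanest there. Recall $\Psi(x)=\frac{\nu}{\nu-1}\mathcal{R}_\nu\!\left(\frac{\mu-x}{\sigma}\right)$, and that by \eqref{eq5} we have
\begin{align*}
\Psi(x)=\frac{\mathbb{E}\left[e^{\nu\tau_y}\right]\mathbb{E}\left[e^{(\nu-2)\tau_y}\right]}{\mathbb{E}\left[e^{(\nu-1)\tau_y}\right]^2},
\end{align*}
where I may take $y\downarrow x$ to work with the local behaviour of the hitting-time Laplace transforms. The two extremes $x\to\pm\infty$ in the argument of $\mathcal{R}_\nu$ correspond to the two regimes of the hitting time $\tau_y$: when the starting point is far below the target the hitting time is large, and when it is just below the target the hitting time degenerates to zero. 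In the degenerate regime $\tau_y\to 0$, each expectation $\mathbb{E}[e^{(\nu-k)\tau_y}]\to 1$, so $\Psi\to 1$ and hence $\mathcal{R}_\nu\to\frac{\nu-1}{\nu}$; in the opposite regime a Laplace-type expansion of the ratio controls the other limit, giving $\Psi\to\frac{\nu}{\nu-1}$ and hence $\mathcal{R}_\nu\to 1$. An alternative, and perhaps cleaner, route is to bypass the limits and argue directly from H\"older: the inequality \eqref{eq2} already gives $\Psi>\frac{\nu}{\nu-1}\cdot\frac{\nu-1}{\nu}$-type control, but extracting the exact constants is more transparent from the asymptotics of $H_\nu$.

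The main obstacle will be justifying the asymptotic evaluation of the limits rigorously, i.e.\ establishing $\lim_{x\to+\infty}\mathcal{R}_\nu(x)=1$ and $\lim_{x\to-\infty}\mathcal{R}_\nu(x)=\frac{\nu-1}{\nu}$. These follow from the known large-argument asymptotics of the Hermite functions $H_\nu$, which for $x\to+\infty$ behave like $(2x)^\nu$ up to lower-order corrections, so that the leading terms in $\frac{H_{\nu-1}^2}{H_\nu H_{\nu-2}}$ cancel to give the ratio $1$; the behaviour as $x\to-\infty$ is governed by the subdominant recessive term and yields the constant $\frac{\nu-1}{\nu}$. I would cite the standard asymptotic expansions (e.g.\ from \cite{Lebedev} or \cite{Erdelyi}) rather than rederive them. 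Once the two limits are in hand, strict monotonicity from Theorem \ref{MonProp} immediately upgrades them to the strict two-sided bound \eqref{Bounds}. Finally, the Tur\'an-type inequality is an immediate consequence: the lower bound $\mathcal{R}_\nu(x)>1$ reads $\frac{H_{\nu-1}(x)^2}{H_\nu(x)H_{\nu-2}(x)}>1$, and since $H_\nu$ and $H_{\nu-2}$ are positive (being, up to the change of variables in \eqref{eq4}, values of the strictly positive eigenfunction $\psi$ and its second derivative $\psi''$), multiplying through preserves the direction and yields $H_{\nu-1}(x)^2-H_\nu(x)H_{\nu-2}(x)>0$.
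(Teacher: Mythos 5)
Your proof is correct and follows essentially the same route as the paper: compute $\lim_{x\to+\infty}\mathcal{R}_\nu(x)=1$ and $\lim_{x\to-\infty}\mathcal{R}_\nu(x)=\frac{\nu-1}{\nu}$ from the standard asymptotics of $H_\nu$ in \cite{Lebedev}, then invoke the strict monotonicity from Theorem \ref{MonProp} to trap $\mathcal{R}_\nu$ strictly between these limits, with the Tur\'an inequality following from the lower bound and the positivity of $H_\nu$ and $H_{\nu-2}$. One caution: the probabilistic detour you sketch is not actually used and contains a slip --- by \eqref{eq5} the quantity $\mathbb{E}[e^{\nu\tau_y}]\mathbb{E}[e^{(\nu-2)\tau_y}]/\mathbb{E}[e^{(\nu-1)\tau_y}]^2$ equals $\Psi(y)/\Psi(x)$, not $\Psi(x)$, so it tends trivially to $1$ as $y\downarrow x$ and cannot produce the limits; the asymptotic argument you fall back on is the right (and the paper's) one.
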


\begin{proof}
	Equations (10.6.4) and (10.6.7) in \cite{Lebedev} provide the asymptotic behavior of $H_\nu(x)$ for both (large) positive and (large) negative values of $x$. In particular, it holds that $$\lim\limits_{x\downarrow-\infty}\mathcal{R}_\nu(x)=\frac{\nu-1}{\nu},\quad\quad\lim\limits_{x\uparrow+\infty}\mathcal{R}_\nu(x)=1.$$ Thus, \eqref{Bounds} follows from the strict monotonicity of $x\mapsto\mathcal{R}_\nu(x)$ proved in Theorem \eqref{MonProp}.
\end{proof}
Since, by \eqref{identityCyHer}, we have $\mathcal{R}_\nu\left(\frac{x}{\sqrt{2}}\right)=\frac{D_{\nu-1}(x)^2}{D_\nu(x)D_{\nu-2}(x)}$ for any $x\in\mathbb{R}$, the next proposition easily follows from Theorem \ref{MonProp} and Corollary \ref{CorBounds}.
\begin{proposition}\label{CylinderBound}
	For all $\nu<0$, the function $\widetilde{\mathcal{R}}_\nu:\mathbb{R}\mapsto\mathbb{R}$ defined as 
	\begin{align*}
	\widetilde{\mathcal{R}}_\nu\left(x\right):=\frac{D_{\nu-1}(x)^2}{D_\nu(x)D_{\nu-2}(x)},\quad x\in\mathbb{R},
	\end{align*}
	is strictly decreasing. Moreover, it holds $$1<\widetilde{\mathcal{R}}_\nu(x)<\frac{\nu-1}{\nu},\quad\text{for all }x\in\mathbb{R}.$$
\end{proposition}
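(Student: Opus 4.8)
The plan is to deduce both claims directly from the results already obtained for the Hermite ratio $\mathcal{R}_\nu$, using the identity $\widetilde{\mathcal{R}}_\nu(x) = \mathcal{R}_\nu(x/\sqrt{2})$ recorded immediately before the statement. First I would establish this identity by inserting the relation \eqref{identityCyHer} into the definition of $\widetilde{\mathcal{R}}_\nu$. Writing each factor as $D_{\nu-j}(x) = 2^{-(\nu-j)/2} e^{-x^2/4} H_{\nu-j}(x/\sqrt{2})$ for $j=0,1,2$ and forming the ratio, the Gaussian prefactors contribute $e^{-x^2/2}$ to both numerator and denominator and hence cancel, while the powers of $2$ contribute $2^{-(\nu-1)}=2^{1-\nu}$ to the numerator and $2^{-\nu/2}\cdot 2^{-(\nu-2)/2}=2^{1-\nu}$ to the denominator, so they cancel as well. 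What remains is precisely $H_{\nu-1}(x/\sqrt{2})^2/\big(H_\nu(x/\sqrt{2})\,H_{\nu-2}(x/\sqrt{2})\big)$, which is $\mathcal{R}_\nu(x/\sqrt{2})$ by the definition \eqref{Ratio}.

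With this identity in hand, the monotonicity claim is immediate: the map $x \mapsto x/\sqrt{2}$ is a strictly increasing bijection of $\mathbb{R}$ onto itself, and by Theorem \ref{MonProp} the function $\mathcal{R}_\nu$ is strictly decreasing; hence the composition $\widetilde{\mathcal{R}}_\nu = \mathcal{R}_\nu(\cdot/\sqrt{2})$ is strictly decreasing. For the bounds, I would observe that as $x$ ranges over $\mathbb{R}$ so does $x/\sqrt{2}$, so the uniform estimates $1 < \mathcal{R}_\nu(y) < \frac{\nu-1}{\nu}$, valid for every $y \in \mathbb{R}$ by Corollary \ref{CorBounds}, transfer verbatim to yield $1 < \widetilde{\mathcal{R}}_\nu(x) < \frac{\nu-1}{\nu}$ for all $x \in \mathbb{R}$.

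There is no genuine obstacle here: all of the analytic and probabilistic content has already been carried out in Theorem \ref{MonProp} and Corollary \ref{CorBounds}, and the proposition is a pure change-of-variables corollary of those two results. The only point requiring minor care is the bookkeeping in the cancellation of the constants $2^{-(\nu-j)/2}$ and the exponential factors $e^{-x^2/4}$, which I would record in the single display described above before invoking the two previous results.
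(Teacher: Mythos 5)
Your proposal is correct and follows exactly the paper's route: the paper likewise derives the proposition by noting the identity $\widetilde{\mathcal{R}}_\nu(x)=\mathcal{R}_\nu\left(\frac{x}{\sqrt{2}}\right)$ from \eqref{identityCyHer} and then invoking Theorem \ref{MonProp} and Corollary \ref{CorBounds}. Your explicit bookkeeping of the cancellation of the factors $2^{-(\nu-j)/2}$ and $e^{-x^2/4}$ is accurate and merely spells out what the paper leaves implicit.
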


\begin{remark}\label{Remark}
 It is worth mentioning that lower and upper bounds of the function $\widetilde{\mathcal{R}}_\nu$ associated to the parabolic cylinder function $U_{-\nu-\frac{1}{2}}(x)=D_{\nu}(x)$ (see (19.3.1) in \cite{Abra}) have also been derived in \cite{Segura}. In that paper, the right-hand side of equation (28) (see also Remark 1 in \cite{baricz}) yields an upper bound for $\widetilde{\mathcal{R}}_\nu$ which is strictly less than the one we have obtained in Proposition \ref{CylinderBound}. Given that our upper bound is optimal by the proved strict monotonicity of $\widetilde{\mathcal{R}}_\nu$, it seems that there is something fishy in eq. (28) of \cite{Segura}. Also, a simple numerical analysis seems to contradict the upper bound found in \cite{Segura}. 
\end{remark}
\vspace{1cm}

\indent \textbf{Acknowledgments.} Financial support by the German Research Foundation (DFG) through the Collaborative Research Centre 1283 ``Taming uncertainty and profiting from randomness and low regularity in analysis, stochastics and their applications'' is gratefully acknowledged by the author. I wish to thank Giorgio Ferrari for useful discussions.


\end{document}